\newcommand{\E}{\mathbb{E}}
\renewcommand{\Pr}{\mathbb{P}}
\newcommand{\bR}{\mathbb{R}}
\newcommand{\Ito}{It\^o }
\newtheorem{remark}{Remark}
\title{The optimal stopping time for a class of It\^o diffusion bridges\thanks{Submitted to the editors on March, 26th, 2019.
\funding{This research was partially supported by the Spanish Ministry of Economy and Competitiveness Grants MTM2017-85618-P via
FEDER funds and MTM2015-72907-EXP; both authors also thank the NYUAD, Abu Dhabi, United Arab Emirates, for hosting them during the fall  2018.}}}
\author{Bernardo D'Auria\thanks{UC3M, Department of Statistics, 28911, Legan\'{e}s, Spain \& UC3M-BS, Institute of Financial Big Data, 28903 Getafe, Spain 
  (\email{bernardo.dauria@uc3m.es}).}
\and Alessandro Ferriero\thanks{UAM \& ICMAT, Department of Mathematics, 28049 Madrid, Spain
  (\email{alessandro.ferriero@uam.es}).}}
\begin{document}

\maketitle

\begin{abstract}
The scope of this paper is to study the optimal stopping problems associated to a stochastic process, which may represent the gain of an investment, for which information on the final value is available a priori. 
This information may proceed, for example, from insider trading or from pinning at expiration of stock options. 

We solve and provide explicit solutions to these optimization problems.
As special case, we discuss different processes whose optimal barrier has the same shape as the optimal barrier of the Brownian bridge. So doing we provide a catalogue of alternatives to the Brownian bridge which in practice could be better adapted to the data. Moreover, we investigate if, for any given (decreasing) curve, there exists a process with this curve as optimal barrier. This provides a model for the optimal liquidation time, i.e. the optimal time at which the investor should liquidate a position in order to maximize the gain.
\end{abstract}

\begin{keywords}
  Hamilton-Jacobi-Bellman equation, optimal stopping time, Brownian bridge, liquidation strategy.
\end{keywords}

\begin{AMS}
60G40, 60H30, 91B26
\end{AMS}

\section{Introduction}
The scope of this paper is to study the optimal stopping problems associated to a stochastic process $\{X_s\}$, for time $s$ in $[t,1]$, which may represent the gain of an investment, for which information on the final value, say at time $s=1$, is available a priori. 
This information may proceed, for example, from insider trading or from pinning at expiration time of stock options. 

Roughly speaking, the class of stochastic processes subject of our study is defined by bringing the infinite horizon mean-reverting Ornstein-Uhlenbeck process with constant parameters to a finite horizon.

In this work we solve and provide explicit solutions to the optimal stopping time problems associated to this class, which contains as particular case but it is not limited to the optimal stopping time associated with the Brownian bridge
	$$dX_s=-\frac{X_s}{1-s}ds+dB_s,\;\;\;s\in[t,1],\;\;\;\mbox{ with }X_t=x.$$
In this case it is known (see \cite{S69}, \cite{EW09}) that the optimal stopping time of
	$$\sup\{\E[X_\tau|X_t=x]:\tau\mbox{ stopping time in }[t,1]\}=:V(x,t)$$
is the one given by 
	$$\tau_*:=\inf\{s\in[t,1]:X_s\geq \beta_*\sqrt{1-s}\},$$
for an appropriate constant $\beta_*\approx0.84$. In other words, if the stock price is equal to the optimal barrier $\beta_*\sqrt{1-t}$ at time $t$, then $X_{t}= V(X_t,t)$ and thus the stock price is the maximum in expectation.

We also discuss different processes $\{X_s\}$ whose optimal barrier has the same shape as the optimal barrier of the Brownian bridge. So doing we provide a catalogue of alternatives to the Brownian bridge which in practice could be better adapted to the data. 

Moreover, we investigate if, for any given (decreasing) curve, there exists a process with this curve as optimal barrier. 
This provides a model for the optimal liquidation time, i.e. the optimal time at which the investor should liquidate a position in order to maximize her gain. 
More precisely, an investor takes short/long positions in the financial market based on her view about the future economy; 
for example, the real estate price is believed to increase in the long term, the inflation is believed to increase in the short term, the value of a certain company is believed to reduce in the medium term, etc. 
If $X_s$ is the market price at a time $s$ of a product of the market which is the object of the investor's view, the investor's view can be modeled as a deterministic function $\gamma(s)$ which represents the ``right'' expected value at time $s$ assigned by the investor at the time $t<s$, when the position needs to be taken, to the product. 
One may think of $\gamma(\cdot)$ as the short/medium/long period view. 
Of course, if the evolution of the economy induces the investor to distrust her initial view, the position is liquidated. 
Otherwise, if the investor maintains her view over time, we are interested in answering the following question:
what is the optimal time at which the investor should liquidate her position in order to maximize the gain? 

Similar problems have been studied in the recent literature. 
As mentioned at the beginning, the optimal stopping time associated to the case in which $\{X_s\}$ evolves like a Brownian bridge (finite horizon, i.e. $s\in[0,1]$) has been originally investigated in \cite{S69}, later with a different approach in \cite{EW09}, and the optimal barrier is equal to $\beta_*\sqrt{1-s}$. 
In \cite{BCSY15}, the authors study the double optimal stopping time problem, i.e. a pair of stopping times $\tau_1<\tau_2$ such that the expected spread between the payoffs at $\tau_1$ and $\tau_2$ is maximized, still associated to the case in which $\{X_s\}$ evolves like a Brownian bridge. 
The two optimal barriers which define the two optimal stopping times are found to have the same shape as in \cite{EW09}, i.e. $\beta_1\sqrt{1-s}$, $\beta_1\approx -0.56$, and $\beta_2\sqrt{1-s}$, $\beta_2=\beta_*$.
In \cite{ELT11}, the authors study the optimal stopping problem when $\{X_s\}$ evolves like a mean-reverting Ornstein-Uhlenbeck process (infinite horizon, i.e. $s\in[0,\infty)$), but in the presence of a stop-loss barrier, i.e. a level $B<0$ such that the position is liquidated as $X_s\leq B$. 
Furthermore, they study the dependency of the optimal barrier (which is a level $b>0$ in this case) to the parameters of the Ornstein-Uhlenbeck process and to the stop-loss level $B$. 

The paper is organized as follows. In Section \ref{sec:form}, we introduce the class of processes studied in our work. In Section \ref{sec:HJB} and \ref{sec:sol}, we find  the optimal barrier by using the Hamilton-Jacobi-Bellman equations and prove the optimality. In Section \ref{sec:conc}, we discuss the results contained in the paper. 





\section{The Formulation of the Problem}\label{sec:form}

Our problem can be formulated as follows. Let us consider a mean reverting Ornstein-Uhlenbeck process with constant coefficients $\theta\geq0$, $\sigma>0$:
	$$dZ_t = -\theta Z_t dt+\sigma dB_t, \;\;\; t>0,$$
where $\{B_t\}$ is the standard Brownian motion.
The mean of $Z_t$ converges to $0$ and its variance converges to $\sigma^2/(2\theta)$, as $t$ goes to $\infty$, which is $\infty$, if $\theta=0$. 

We want to adapt this model to a finite time horizon with final time, say, $t=1$. Moreover, as we want also to incorporate our view of the future, i.e. the function $\gamma$, in the model, if $b(s):=\gamma(s)-\gamma(1)$, $s\in[0,1]$, is a non-negative decreasing function, we map $s$ in $[0,1]$ to $t$ in $[0,\infty]$ by $t=-\ln [b(s)/b(0)]$ and we re-write the Ornstein-Uhlenbeck process as
 	$$d\left[\frac{X_s-\gamma(1)}{b(s)}\right]= -\theta\frac{X_s-\gamma(1)}{b(s)}d[-\ln b(s)]+\sigma d[B_{-\ln b(s)}], \;\;\; s\in(0,1],$$
where $X_s:=b(s)Z_{-\ln [b(s)/b(0)]}+\gamma(1)$. 

Note that $X_1=\gamma(1)$, as proved in Lemma \ref{lem:bridge_proof}, and the quantity 
$Z_{-\ln [b(s)/b(0)]}=[X_s-\gamma(1)]/b(s)$ thus represents the deviation of the market price to the final value in proportion to the deviation of the ``right'' value to the final value,
at time $s$. Note also that, despite there are different way to map $[0,1]$ to $[0,\infty]$ making use of the function $b$, the chosen map is the most natural. Indeed, with this choice we have the following equation for the expected value of $X_s-\gamma(1)$:
	$$d[\ln \E[X_s-\gamma(1)]]=(\theta+1) d[\ln b(s)],$$ 
i.e. the rate of change in logarithmic scale of $\E[X_s-\gamma(1)]$ is proportional to the same rate of $b(s)$.

For convenience, we relabel the parameters by $\theta:=\alpha^2$ and $\sigma:=\sqrt{2/\beta^2}$. 
Writing the equation for $\{X_s\}$,
starting at any time $t$ in $(0,1)$, we then obtain
	\begin{equation}\label{eq:def_X}
	dX_s = (1+\alpha^2)(X_s-\gamma(1))\frac{b'(s)}{b(s)}ds+\sqrt{\frac{-2b'(s)b(s)}{\beta^2}}dB_s, \;\;\;s\in(t,1),\;\;\; X_t=x.
	\end{equation}
	
We are interested in the optimization problem 
	\begin{equation}\label{eq:def_V}
	V(x,t):=\sup_{\tau}\E[X_\tau|X_t=x],
	\end{equation}
where $\tau\in[t,1]$ is a stopping time.
Here we assume that at the final time, which without loss of generality is normalized to $s=1$, the market price coincides with the ``right'' value, i.e. $X_1=\gamma(1)$, as proved in Lemma \ref{lem:bridge_proof}.

\begin{remark}
If $\alpha=1$, $\beta=\beta_*$, $\gamma(s)=\beta_*\sqrt{1-s}$, equation \eqref{eq:def_X} becomes:
	$$dX_s = -\frac{X_s}{1-s}ds+dB_s,$$
which is nothing but the Brownian bridge treated in \cite{EW09}, and derived as a particular case in our setting. 
\end{remark}

\begin{lemma}\label{lem:bridge_proof}
The process $\{X_s\}$ defined by \eqref{eq:def_X} is equal to
  \begin{equation}
    X_s=b^{1+\alpha^2}(s)\left[\frac{x-\gamma(1)}{b^{1+\alpha^2}(t)}+
  \int_t^s \sqrt{\frac{-2b'(r)}{\beta^2b^{1+2\alpha^2}(r)}} dB_r \right]+\gamma(1),\;\;\;s\in[t,1] 
  \end{equation}
Hence, $X_1=\gamma(1)$ and for $s$ in $[t,1]$,
  \begin{align}\label{eq:EandVar}
	\E[X_s] &= \gamma(1) + (x-\gamma(1)) \left[b(s)/b(t)\right]^{1+\alpha^2}\\
	\mbox{Var}[X_s] &= (b(s)/\beta)^2\cdot\left\{
	\begin{array}{ll}
    \displaystyle \frac{1-\left[b(s)/b(t)\right]^{2\alpha^2}}{\alpha^2}\ , &\alpha\neq0\\
     \displaystyle -\ln\left[b(s)/b(t)\right]^2\ ,
    &\alpha=0
  \end{array}\right.
	\end{align}
\end{lemma}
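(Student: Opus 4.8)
The plan is to recognize \eqref{eq:def_X} as a linear SDE with deterministic, time-dependent coefficients and to solve it by the integrating-factor method. First I would center the process by setting $Y_s := X_s - \gamma(1)$, so that $Y_t = x - \gamma(1)$ and $Y$ satisfies the homogeneous equation
\begin{equation*}
dY_s = (1+\alpha^2)\,Y_s\,\frac{b'(s)}{b(s)}\,ds + \sqrt{\frac{-2b'(s)b(s)}{\beta^2}}\,dB_s .
\end{equation*}
The drift coefficient $(1+\alpha^2)b'(s)/b(s)$ is exactly the logarithmic derivative of $b^{1+\alpha^2}(s)$, which singles out $b^{-(1+\alpha^2)}(s)$ as the natural integrating factor.

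Next I would apply \Ito's formula to $Z_s := Y_s\,b^{-(1+\alpha^2)}(s)$. Because $b$ is a deterministic, differentiable function (hence of zero quadratic variation), the product rule generates no cross term, and
\begin{equation*}
dZ_s = b^{-(1+\alpha^2)}(s)\,dY_s + Y_s\,\frac{d}{ds}\!\left[b^{-(1+\alpha^2)}(s)\right]ds .
\end{equation*}
The two drift contributions cancel identically, leaving the driftless equation $dZ_s = \sqrt{-2b'(s)/(\beta^2 b^{1+2\alpha^2}(s))}\,dB_s$ once the diffusion coefficient is simplified. Integrating from $t$ to $s$ with $Z_t = (x-\gamma(1))/b^{1+\alpha^2}(t)$, then multiplying back by $b^{1+\alpha^2}(s)$ and restoring the shift $\gamma(1)$, yields the claimed representation for $X_s$.

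From this representation the moments follow at once. Since the \Ito integral is a centered martingale, taking expectations annihilates the stochastic term and gives $\E[X_s] = \gamma(1) + (x-\gamma(1))[b(s)/b(t)]^{1+\alpha^2}$. For the variance the deterministic shift is irrelevant, and the \Ito isometry gives
\begin{equation*}
\mbox{Var}[X_s] = b^{2(1+\alpha^2)}(s)\int_t^s \frac{-2b'(r)}{\beta^2 b^{1+2\alpha^2}(r)}\,dr .
\end{equation*}
The remaining integral is elementary: the substitution $u = b(r)$ converts it into an integral of $u^{-(1+2\alpha^2)}$, producing the power-law expression with the $1/\alpha^2$ factor when $\alpha \neq 0$ and the logarithmic expression when $\alpha = 0$, which matches \eqref{eq:EandVar} after collecting the common prefactor $(b(s)/\beta)^2$.

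Finally, to establish $X_1 = \gamma(1)$ I would let $s \to 1$ and use $b(1) = \gamma(1)-\gamma(1) = 0$: the mean tends to $\gamma(1)$ because $[b(s)/b(t)]^{1+\alpha^2}\to 0$, and the variance tends to $0$ because the prefactor $b^2(s)\to 0$ dominates (in the $\alpha=0$ case this rests on $b^2\ln b \to 0$), so $X_s \to \gamma(1)$ in $L^2$. I expect this boundary step to be the only delicate point. Although $\mbox{Var}[Z_s]$ itself blows up as $s\to1$, the vanishing factor $b^{2(1+\alpha^2)}(s)$ exactly compensates it; this is precisely the pinning behaviour being asserted, so the argument must track the competing limits on $X_s$ rather than examine the \Ito integral in isolation.
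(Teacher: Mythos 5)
Your proposal is correct and follows essentially the same route as the paper: the integrating factor $b^{-(1+\alpha^2)}(s)$ you identify is exactly the factor $[b(t)/b(s)]^{1+\alpha^2}$ (up to a constant) that the authors multiply by, and the moments and the pinning $X_1=\gamma(1)$ are obtained in the same way (It\^o isometry plus the vanishing of the variance as $s\to1$, combined with path continuity). The only addition you make is carrying out the elementary substitution $u=b(r)$ explicitly, which the paper leaves implicit.
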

\begin{proof}
By multiplying the two terms of the equation \eqref{eq:def_X}  by $[b(t)/b(s)]^{1+\alpha^2}$, which is not identically zero, this can be rewritten as
	$$\left[\frac{b(t)}{b(s)}\right]^{1+\alpha^2}[dX_s - (1+\alpha^2)(X_s-\gamma(1))\frac{b'(s)}{b(s)}ds]=
	\left[\frac{b(t)}{b(s)}\right]^{1+\alpha^2}\sqrt{\frac{-2b'(s)b(s)}{\beta^2}}dB_s.$$
Since the \Ito derivative of $(X_s-\gamma(1))[b(t)/b(s)]^{1+\alpha^2}$ is equal to
	$$d\left[(X_s-\gamma(1))\left[\frac{b(t)}{b(s)}\right]^{1+\alpha^2}\right]
	=\left[\frac{b(t)}{b(s)}\right]^{1+\alpha^2}[dX_s - (1+\alpha^2)(X_s-\gamma(1))\frac{b'(s)}{b(s)}ds],$$
we have that
	$$(X_s-\gamma(1))\left[\frac{b(t)}{b(s)}\right]^{1+\alpha^2}=X_t-\gamma(1)+
	\int_t^s\left[\frac{b(t)}{b(r)}\right]^{1+\alpha^2}\sqrt{\frac{-2b'(r)b(r)}{\beta^2}}dB_r.$$
This implies the expression for  $X_s$ given in the statement.
Moreover, the formulas for the mean and the variance of $X_s$ can be directly derived from this expression and the \Ito isometry. 

Lastly, as the $\mbox{Var}[X_s]\to0$, when $s$ converges to $1$, and since $\{X_s\}$ is continuous, we obtain $X_1=\gamma(1)$. This completes the proof.
\end{proof}

\begin{remark}
	The statements of the Lemma \ref{lem:bridge_proof} on the mean and variance of $\{X_s\}$ are consistent with the corresponding quantities for the Ornstein-Uhlenbeck process $\{Z_t\}$ introduced at the beginning of the section, i.e. $\E[(X_s-\gamma(1))/b(s)]\to 0$ and $\mbox{Var}[(X_s-\gamma(1))/b(s)]\to 1/(\alpha\beta)^2=\sigma^2/(2\theta)$, as $s\to1$.
	
	Moreover, observe that $\mbox{Var}[X_s]$ depends continuously on $\alpha$ also in $\alpha=0$ since $(1-\left[b(s)/b(t)\right]^{2\alpha^2})/\alpha^2$ converges to $-\ln\left[b(s)/b(t)\right]^2$, as $\alpha\to0$.
\end{remark}

\section{The HJB Equation}\label{sec:HJB}

The Hamilton-Jacobi-Bellman (HJB) equation for the function $V$ defined in \eqref{eq:def_V} are given by (see \cite{PS06}): 
\begin{equation}\label{eq:for_V}
\begin{array}{ll}
\left\{\begin{array}{ll}
\displaystyle V_t(x,t) + (1+\alpha^2)(x-\gamma(1)) \frac{b'(t)}{b(t)} V_{x}(x,t) -\frac{b'(t)b(t)}{\beta^2}V_{xx}(x,t) =0,& x < a(t) + \gamma(1)\\
V(x,t) = x, & x \geq a(t) + \gamma(1) \\
V_x(x,t)=1, & x = a(t) + \gamma(1) \\
V(x,t)\to\gamma(1),& x\to-\infty
\end{array}\right.
& , t\in[0,1]
\end{array}
\end{equation}
where $a:[t,1]\to\bR$ is the free boundary such that the stopping time
	$$\tau^*:=\inf\{s\in[t,1):X_s\geq a(s) + \gamma(1)\}$$
is the optimal stopping time for the problem \eqref{eq:def_V}, i.e. $V(x,t)=\E[X_{\tau^*}]$.

The three boundary conditions in \eqref{eq:for_V} are necessary, but not sufficient, conditions which $V$ must satisfy in order to be a candidate solution of the optimal stopping problem. 
Indeed, the first and second conditions are nothing but that $X_s+\gamma(1)$ is equal to the maximum in expectation when we stop the process, i.e. $V(a(s)+\gamma(1),s)=a(s)+\gamma(1)$, and that if $x\geq a(s)+\gamma(1)$, then $V(x,s)=x$ and thus, as $V$ is smooth, $V_x(a(s)+\gamma(1),s)=1$. The last condition is due to the fact that the more negative the process starts off, the more difficult the process overcomes $\gamma(1)$.

If we assume that, given $y\in\bR$, $V(y a(t) + \gamma(1),t)-\gamma(1)$ has a value proportional to $a(t)$ independently on $t$, and $a(t)=b(t)$, i.e. 
$$ V(x,t) - \gamma(1) = f\left(\frac{x-\gamma(1)}{b(t)}\right) b(t), $$
then \eqref{eq:for_V} can be rewritten as
\begin{equation}\label{eq:for_f}
\left\{\begin{array}{ll}
\displaystyle f(y)+\alpha^2y f'(y)-\frac{1}{\beta^2}f''(y)=0,& y < 1\\
f(1)=1&\\
f'(1)=1&\\
f(y)\to0,& y\to-\infty
\end{array}\right.
\end{equation}
where $y =(x-\gamma(1))/b(t)$.

\begin{lemma}\label{lem:sol_ODE}
The equation \eqref{eq:for_f} admits a unique solution, for any $\alpha\geq0$, and for a unique $\beta=\beta(\alpha)$, which depends on $\alpha$.
\end{lemma}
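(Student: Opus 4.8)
The plan is to treat \eqref{eq:for_f} as a boundary value problem for the linear second-order ODE
\begin{equation*}
\frac{1}{\beta^2}f''(y)-\alpha^2 y f'(y)-f(y)=0,\qquad y<1,
\end{equation*}
which for each fixed $\beta$ carries a two-dimensional solution space, and to argue that the decay condition $f(y)\to0$ at $-\infty$ selects a one-dimensional subspace, so that the two remaining conditions $f(1)=1$ and $f'(1)=1$ amount to one scaling normalization plus one scalar equation determining $\beta$. First I would dispose of the degenerate case $\alpha=0$ directly: there the equation reduces to $f''=\beta^2 f$, the only decaying solution is $f(y)=Ae^{\beta y}$, and imposing $f(1)=f'(1)=1$ forces $A=e^{-\beta}$ and $\beta=1$, giving existence and uniqueness with $\beta(0)=1$.

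For $\alpha>0$ the key simplification is the substitution $w=\alpha\beta y$, under which the equation becomes $\beta$-\emph{independent}: setting $\Phi(w):=f(w/(\alpha\beta))$ one obtains
\begin{equation*}
\Phi''(w)-w\Phi'(w)-\frac{1}{\alpha^2}\Phi(w)=0,
\end{equation*}
equivalently, in self-adjoint form, $\bigl(e^{-w^2/2}\Phi'\bigr)'=\alpha^{-2}e^{-w^2/2}\Phi$. Hence all dependence on $\beta$ enters only through the evaluation point $w=\alpha\beta$ corresponding to $y=1$. I would then establish that this Weber-type equation admits, up to a multiplicative constant, a unique solution decaying as $w\to-\infty$: the bound $\le 1$ on the dimension follows from Abel's identity, which makes the Wronskian proportional to $e^{w^2/2}$ and so forbids two independent decaying solutions, while existence of the recessive solution $\Phi\sim|w|^{-1/\alpha^2}$ follows by reduction of order against the dominant solution $\sim e^{w^2/2}$.

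Using the self-adjoint form I would next show that this recessive solution (normalized to be positive near $-\infty$) satisfies $\Phi>0$ and $\Phi'>0$ on all of $\bR$. Writing $p(w)=e^{-w^2/2}$, integration of $(p\Phi')'=\alpha^{-2}p\Phi$ from $-\infty$ together with $\Phi\to0$ forces $p(w)\Phi'(w)\to0$ as $w\to-\infty$, since a nonzero limit $L$ would give $\Phi'\sim Le^{w^2/2}$ and hence make $\Phi$ blow up, contradicting decay. As $\Phi>0$ on an initial interval we have $(p\Phi')'>0$ there, so $p\Phi'$ increases from $0$ and stays positive; thus $\Phi'>0$, $\Phi$ is strictly increasing, and positivity propagates so that $\Phi$ never vanishes.

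Finally, the normalization $f(1)=1$ forces $f(y)=\Phi(\alpha\beta y)/\Phi(\alpha\beta)$, which is legitimate because $\Phi(\alpha\beta)>0$, and the remaining condition $f'(1)=1$ becomes, with $q:=\alpha\beta$, the scalar equation $h(q):=q\Phi'(q)-\Phi(q)=0$. I would close the argument by monotonicity: differentiating gives $h'(q)=q\Phi''(q)$, and the ODE yields $\Phi''(q)=q\Phi'(q)+\alpha^{-2}\Phi(q)>0$ for $q>0$, so $h$ is strictly increasing on $(0,\infty)$; since $h(0^+)=-\Phi(0)<0$, while $h'(q)=q\Phi''(q)\ge q^2\Phi'(q)$ with $\Phi'$ positive and increasing forces $h(q)\to+\infty$, there is a unique root $q_*>0$, yielding the unique admissible $\beta(\alpha)=q_*/\alpha$ and the corresponding unique $f$. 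The main obstacle is the third step—the analysis of the recessive solution at $-\infty$, namely establishing one-dimensionality of the decaying space and the limit $p\Phi'\to0$ rigorously—whereas the reduction to the monotone scalar equation $h(q)=0$ is then routine.
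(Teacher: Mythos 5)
Your proposal is correct and shares the paper's overall skeleton: dispose of $\alpha=0$ explicitly, rescale the spatial variable so the ODE becomes a $\beta$-independent Weber-type equation, use the decay condition at $-\infty$ to select the one-dimensional recessive subspace, normalize with $f(1)=1$, and reduce $f'(1)=1$ to one scalar equation in $q=\alpha\beta$. Where you genuinely diverge is in the execution of the two nontrivial steps. The paper identifies the recessive and dominant solutions explicitly through Kummer functions $M(a,b,z)$ and their asymptotics \eqref{U.infty.behavior}, whereas you argue abstractly via Abel's identity (Wronskian $\propto e^{w^2/2}$) and reduction of order; this avoids special functions entirely but, as you note, requires you to be careful that the limit of $p\Phi'$ at $-\infty$ exists before excluding a nonzero value --- this follows from monotonicity of $p\Phi'$ on the initial interval where $\Phi>0$, and you should say so. More substantially, your treatment of the scalar equation $q\Phi'(q)-\Phi(q)=0$ is different from and cleaner than the paper's: the paper's Lemma \ref{lm.uniq.sol} shows every critical point of $h_1(x)/x$ on $x<0$ is a local maximum and combines this with the boundary behavior of $h_1(x)/x$ at $0^-$ and $-\infty$, while you first establish $\Phi>0$ and $\Phi'>0$ from the self-adjoint form and then get strict monotonicity of $h(q)=q\Phi'(q)-\Phi(q)$ on $(0,\infty)$ together with $h(0^+)=-\Phi(0)<0$ and $h(q)\to+\infty$, yielding existence and uniqueness of the root in one stroke. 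Your route buys a self-contained, special-function-free argument with an explicit sign analysis of the recessive solution (which incidentally also feeds into what the paper later needs in Lemma \ref{lem:sign_f}); the paper's route buys closed-form expressions \eqref{h1.def} for $h_1$ that are used downstream in Theorem \ref{thm:proof_opt} and in the numerical computation of $\beta(\alpha)$.
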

\begin{proof}
If $\alpha=0$, then the equation in $\eqref{eq:for_f}$ becomes
	$$f(y)-\frac{1}{\beta^2}f''(y)=0,$$
which has general solution $f(y)=c_1e^{\beta y}+c_2e^{-\beta y}$, with $c_1,c_2\in\bR$. The boundary conditions in \eqref{eq:for_f} give constraints on the parameters. Indeed,
we have that
\begin{align*}
  \lim_{y \to -\infty} f(y)= 0 
    &\implies c_2 = 0 \\
  f(1) = 1 
  & \implies  c_1= e^{-\beta}.
\end{align*}
Therefore the solution assumes the form $f(y) = e^{\beta (y-1)}$
Finally, the last boundary condition $f'(1) = 1$ implies that $\beta=1$ and the solution is
	$$f(y)=e^{y-1}.$$
Therefore, when $\alpha=0$ the equation with boundary conditions $\eqref{eq:for_f}$ has solution if and only if $\beta=\beta(0):=1$.

Assume now $\alpha>0$.
By substituting $f(y) =: h(-\beta y)$, $\beta>0$, and using $x= -\beta y$, we can rewrite the differential equation in \eqref{eq:for_f} without boundary conditions by
  \begin{equation}\label{h.diff.eq}
    h(x) + \alpha^2 x h'(x) = h''(x),\;\;\;x>-\beta \ .
  \end{equation}   
This can be rewritten as
	$$\frac{d}{dx} \left[ e^{-(\alpha x)^2/2} h'(x) \right]=  e^{-(\alpha x)^2/2} h(x).$$
Finally, with a further substitution $h(x) =: u(\alpha x) \exp[(\alpha x)^2/4]$
and $z= \alpha x$, we obtain the so called parabolic cylinder differential equation
$$  
  u''(z) = \left(\frac{z^2}{4} + 
    \frac{1}{\alpha^2} - \frac{1}{2} 
    \right) u(z),\;\;\;z>-\alpha\beta.$$
Two linear independent solutions of the parabolic cylinder differential equation are
\begin{align*}
  \hat u_1( z) &=  \exp\left[-\frac{1}{4} z^2 \right]
    M\left(\xi, \frac{1}{2} , \frac{1}{2} z^2\right)\\
  \hat u_2( z) &= z \exp\left[-\frac{1}{4} z^2 \right]
    M\left(\xi + \frac{1}{2} , \frac{1}{2} + 1 , \frac{1}{2} z^2\right)
\end{align*}
where $\xi := 1/(2\alpha^2) $
and $M(a,b,z)$ is the Kummer's function (see \cite{AS64}  Chapter 13). The functions $\hat u_1$ and $\hat u_2$ are an even and a odd function, respectively, and depend on the parameter $\xi$. However, in order to keep the notation as simple as possible, the dependence on the parameter is not explicitly marked.

For convenience, let us introduce other two independent solutions $ u_1$ and $ u_2$ defined 
by the following linear combinations of $\hat u_1$ and $\hat u_2$:

\begin{align*}
  u_1( z) 
  &=  \frac{\sqrt{\pi}}{2^\xi} \left[
     \frac{1}{\Gamma \left(1/2+\xi\right)} \hat u_1(z)  
     - \frac{\sqrt{2}}{\Gamma (\xi )}  \hat u_2(z)
     \right]\\ 
  u_2(z) 
  &= 2^\xi \left[
        \frac{ \sin(\xi\pi) }{\Gamma (1-\xi )} \hat u_1(z)
      +\sqrt{2} \frac{ \cos(\xi\pi) }{\Gamma \left(1/2-\xi \right)} \hat u_2(z)
      \right]
\end{align*}
where $\Gamma$ is the gamma function.
The expressions above are well defined for all $\xi\in\bR$. 

Note that the function $ u_1$  converges to $0$ and $u_2$ diverges, as $|z|$ goes to $\infty$.
Moreover,
\begin{equation} \label{U.infty.behavior}
\begin{array}{l}
  \displaystyle\lim_{|z|\to\infty}  e^{ z^2/4} z^{2\xi}   u_1(z) = 1 \\
\displaystyle  \lim_{|z|\to\infty} e^{-z^2/4} z^{1-2\xi} u_2(z) = \sqrt{\frac{\pi}{2}} 
\end{array}
\end{equation}
Hence, using the transformation defined between $u$ and $h$, we have that   
\begin{align}
  h_1(z) 
  &:=  u_1(z)  \exp[z^2/4] \notag \\ 
  &=  \frac{\sqrt{\pi}}{2^\xi} \left[
  \frac{1}{\Gamma\left(1/2+\xi\right)}
    M\left(\xi, \frac{1}{2} , \frac{1}{2} z^2\right)
  -\frac{z\sqrt{2}}{\Gamma\left(\xi\right)}  
    M\left(\xi + \frac{1}{2} , \frac{1}{2} + 1 , \frac{1}{2} z^2\right)
  \right] \label{h1.def}\\ 
  h_2( z) 
  &:=  u_2(z) \exp[z^2/4] \notag \\
  &= 2^\xi \left[
    \frac{ \sin(\xi\pi) }{\Gamma (1-\xi )} 
    M\left(\xi , \frac{1}{2} , \frac{1}{2} z^2\right)
  +\frac{z \sqrt{2} \cos(\xi\pi) }{\Gamma \left(1/2-\xi \right)} 
    M\left(\xi + \frac{1}{2} , \frac{1}{2} + 1 , \frac{1}{2} z^2\right)
  \right] 
\end{align}
are, by \eqref{U.infty.behavior},  such that
\begin{align*}
 h_1( z) &\sim \frac{1}{ z^{2\xi}}  \\
 h_2( z) &\sim \sqrt{\frac{\pi}{2}} e^{z^2/2} z^{2\xi-1} 
\end{align*}
for $|z|\to\infty$. 

Therefore, going back to the the differential equations in \eqref{eq:for_f} without boundary conditions, we can write all its solutions by
\begin{align}
  f(y) = c_1 h_1( -\alpha \beta y)  
  + c_2 h_2( -\alpha \beta y),
\end{align}
with $c_1,c_2\in\bR$,
where the parameter $\xi$ in the definition of $h_1$ and $h_2$ is equal to $1/(2\alpha^2)$.

The boundary conditions in \eqref{eq:for_f} give constraints on the parameters. Indeed,
we have that
\begin{align*}
  \lim_{y \to -\infty} f(y)= 0 
    &\implies c_2 = 0 \\
  f(1) = 1 
  & \implies  c_1= \frac{1}{h_1(-\alpha\beta)}.
\end{align*}
Therefore the solution assumes the form
\begin{align}\label{eq:formula_h}
  f(y) 
  &= \frac{ h_1( -\alpha\beta y)}{ h_1(-\alpha\beta)}.
\end{align}
Finally, the last boundary condition $f'(1) = 1$ implies
\begin{equation}\label{const.eq}
     -\alpha\beta\frac{ h_1'( -\alpha\beta)}{ h_1(-\alpha\beta)}=1 \ .
\end{equation}
For every given $\alpha$, by Lemma \ref{lm.uniq.sol} it exists a unique negative solution $x_\alpha$ to the equation $x_\alpha h_1'( x_\alpha)=h_1(x_\alpha)$. 
Therefore, if $\beta(\alpha):=-x_\alpha/\alpha$ (see Figure \ref{fig:parameters}), 
then the differential equation with boundary conditions \eqref{eq:for_f} admits the unique solution given by the formula \eqref{eq:formula_h} with $\beta=\beta(\alpha)$. 
This completes the proof.
\begin{figure}
\centering
  \includegraphics[height=5cm]{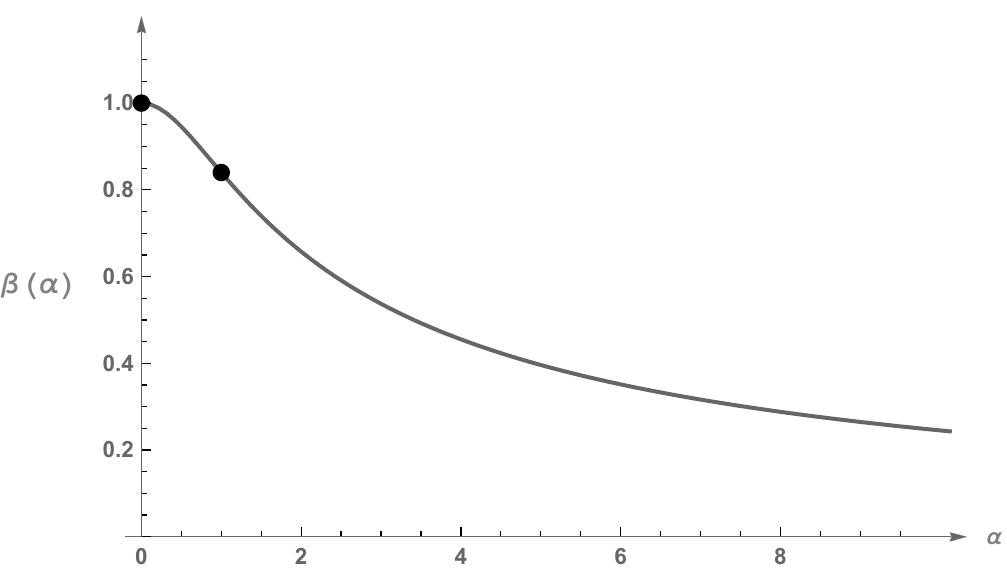}
  \caption{\label{fig:parameters} Plotting $\beta$ as a function of $\alpha$. The marked points are $\beta(0)=1$ and $\beta(1)\approx 0.839924$. }
\end{figure}
\end{proof}

\begin{remark}
Considering again the case of the Brownian bridge treated in \cite{EW09}, $\alpha=1$, implies that
\begin{align*}
h_1(z) 
&= \frac{1 - \Phi(z)}{\phi(z)}
\end{align*}
where $\Phi(z) = \Pr[Z \leq z]$ with $Z$ a standard Normal distributed and $\phi(z)=\Phi'(z)$, and the equation \eqref{const.eq} becomes
$$
\frac{\phi(\beta(1))}{\Phi(\beta(1))}
= \frac{1-\beta^2(1)}{\beta(1)}
$$
that gives as unique solution 
$\beta(1) \approx 0.84$
as already found in \cite{S69} and \cite{EW09}.
\end{remark}

\section{The Solution}\label{sec:sol}

In this section we verify that the solution $V$ to the HJB equation found in Section \ref{sec:HJB} is indeed solution to the optimal stopping problem \eqref{eq:def_V}. This is a needed verification as we remind that the HJB equation represents only a necessary condition for the solution to the optimal stopping time problem, but it is not a sufficient condition.

\begin{theorem}\label{thm:proof_opt}
The function 
	$$V^*(x,t):=\gamma(1)+\left\{
	\begin{array}{ll}
	\displaystyle h_1\left(-\alpha\beta(\alpha)\frac{x-\gamma(1)}{b(t)}\right)
	\frac{b(t)}{h_1(-\alpha\beta(\alpha))},&x<b(t)+\gamma(1)=\gamma(t)\\
	x,&x\geq \gamma(t)
	\end{array}\right.$$
is equal to $V(p,t)$ defined in \eqref{eq:def_V} and 
	$$\tau^*=\inf\{s\in[t,1]:X_s\geq \gamma(t)\},$$ 
i.e. $a(t)=b(t)$, $t\in[0,1]$, is the optimal stopping time.
\end{theorem}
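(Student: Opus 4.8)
The plan is to run a classical verification argument, since, as emphasized after \eqref{eq:for_V}, the HJB system furnishes only a \emph{necessary} condition. Writing $\mathcal{L}$ for the space--time generator appearing in \eqref{eq:for_V}, i.e. $\mathcal{L}u = u_t + (1+\alpha^2)(x-\gamma(1))\frac{b'(t)}{b(t)}u_x - \frac{b'(t)b(t)}{\beta^2}u_{xx}$, I would establish the three pillars of the verification theorem (see \cite{PS06}): (a) $V^*$ is $C^{2,1}$ in the continuation region $\{x<\gamma(t)\}$ and joins the payoff $C^1$-smoothly across the free boundary $x=\gamma(t)$; (b) $V^*\ge x$ everywhere and $\mathcal{L}V^*\le 0$ everywhere, so that $V^*(X_s,s)$ is a supermartingale; and (c) the stopping rule $\tau^*$ turns this supermartingale into a martingale up to the boundary and realizes the payoff there. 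Combining (a)--(c) yields $\E[X_\tau]\le V^*(x,t)$ for every $\tau$, with equality at $\tau^*$.

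For (a) and the easy half of (b): continuity of $V^*$ at $x=\gamma(t)$ is immediate since $f(1)=1$ forces the continuation branch to equal $\gamma(t)=b(t)+\gamma(1)$, and the smooth fit $V^*_x(\gamma(t)-,t)=1$ is exactly the constraint \eqref{const.eq}, i.e. $f'(1)=1$; hence $V^*$ is $C^{2,1}$ on each side with matching first derivatives, and the It\^o--Tanaka/change-of-variable formula along $\{X_s\}$ produces no local-time contribution at the boundary. The sign of the generator is then checked region by region: on $\{x<\gamma(t)\}$ one has $\mathcal{L}V^*=0$ by the PDE in \eqref{eq:for_V} (equivalently the ODE in \eqref{eq:for_f}), while on $\{x\ge\gamma(t)\}$, where $V^*=x$, one computes $\mathcal{L}V^*=(1+\alpha^2)(x-\gamma(1))\frac{b'(t)}{b(t)}\le 0$, because $b$ is non-negative and decreasing (so $b'\le 0$, $b>0$) and $x-\gamma(1)\ge b(t)>0$. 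Thus $\mathcal{L}V^*\le 0$ throughout.

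The main obstacle is the domination $V^*\ge x$, which after the scaling $y=(x-\gamma(1))/b(t)$ reduces to proving $f(y)\ge y$ for $y<1$, with $f$ given by \eqref{eq:formula_h}. For $y\le 0$ this is immediate from positivity of $f$, equivalently positivity of $h_1$ on $(-\alpha\beta(\alpha),\infty)=(x_\alpha,\infty)$, since then $f(y)>0\ge y$. For $0\le y<1$ I would use the differential equation in \eqref{eq:for_f} rewritten as $f''=\beta^2\bigl(f+\alpha^2 y f'\bigr)$: on $[0,1]$ one has $f>0$, $y\ge 0$ and $f'\ge 0$ (monotonicity of $f$ there), hence $f''\ge 0$ and $f$ is convex; since the tangent to $f$ at $y=1$ is precisely the line $\ell(y)=f(1)+f'(1)(y-1)=y$ by the boundary data $f(1)=f'(1)=1$, convexity gives $f(y)\ge\ell(y)=y$ on $[0,1]$. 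Establishing the required positivity and monotonicity of $f$, equivalently of the recessive Kummer solution $h_1$ on the half-line $(x_\alpha,\infty)$, is the delicate point and where I expect the real work; it hinges on the qualitative properties of $h_1$ already isolated in the proof of Lemma \ref{lem:sol_ODE} and in Lemma \ref{lm.uniq.sol}.

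Finally, I would assemble the inequality and the optimality. For an arbitrary stopping time $\tau\in[t,1]$, applying the (local-time-free) It\^o formula on $[t,\tau\wedge s]$ and taking expectations gives $\E[V^*(X_{\tau\wedge s},\tau\wedge s)]\le V^*(x,t)$, the stochastic integral being a genuine martingale because the coefficients and $V^*_x$ are bounded on each compact subinterval of $[t,1)$. Letting $s\uparrow 1$ and using $V^*\ge x$ yields $\E[X_\tau]\le V^*(x,t)$, hence $V\le V^*$; the passage to the limit is justified by uniform integrability, since by Lemma \ref{lem:bridge_proof} $X_s\to\gamma(1)$ with $\mbox{Var}[X_s]\to 0$ and $V^*$ has linear growth. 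For the candidate $\tau^*$, on $[t,\tau^*)$ the process stays in the continuation region where $\mathcal{L}V^*=0$, so $V^*(X_s,s)$ is a martingale, and at $\tau^*$ one has $X_{\tau^*}\ge\gamma(\tau^*)$ with $V^*(X_{\tau^*},\tau^*)=X_{\tau^*}$; therefore $\E[X_{\tau^*}]=V^*(x,t)$, showing the bound is attained. This proves $V=V^*$ and that $\tau^*$, i.e. $a(t)=b(t)$, is optimal.
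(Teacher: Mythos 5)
Your verification architecture coincides with the paper's: It\^o's formula applied to $V^*(X_s,s)$, the observation that the resulting drift equals $(1+\alpha^2)(X_s-\gamma(1))\frac{b'(s)}{b(s)}\mathbbm{1}_{\{X_s\ge \gamma(s)\}}\le 0$, boundedness of $V^*_x$ times the diffusion coefficient to get a true martingale, optional sampling for the inequality $\E[X_\tau]\le V^*(x,t)$, and equality at $\tau^*$ because the drift term vanishes in the continuation region and $V^*=x$ at the boundary. All of that is sound and is essentially what the paper does.

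The one place where you diverge, and where your argument has a genuine hole, is the domination $V^*\ge x$, i.e.\ $f(y)\ge y$ for $0\le y<1$. You propose to get it from convexity of $f$ on $[0,1]$, derived from $f''=\beta^2(f+\alpha^2 y f')$ together with $f>0$ and $f'\ge 0$ there. The monotonicity $f'\ge 0$ on $[0,1]$ is asserted in passing (``monotonicity of $f$ there'') and then acknowledged as the real work, but it is not a consequence of the boundary data: from the ODE one only sees that any critical point of $f$ in the region where $f>0$ is a strict local minimum (since $f'(\hat y)=0$ forces $f''(\hat y)=\beta^2 f(\hat y)>0$), and this does not exclude $f$ decreasing on an initial portion of $[0,1]$ and then increasing up to $f(1)=1$; in that scenario $f+\alpha^2 y f'$ need not be nonnegative and the convexity step collapses. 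The paper's Lemma \ref{lem:sign_f} avoids monotonicity altogether: assuming $f(\bar y)=\bar y$ for some $\bar y\in[0,1)$, it produces $\tilde y\in(\bar y,1)$ with $f''(\tilde y)=0$, deduces $f'(\tilde y)<0$ from the ODE, and then locates a local maximum $\hat y$ at which $f''(\hat y)=\beta^2 f(\hat y)>0$, a contradiction; a symmetric argument using $f(y)\to 0$ as $y\to-\infty$ gives $f>0$ on $(-\infty,0]$, which you also need for the $y\le 0$ case but only reduce to an unproved positivity of $h_1$. To close your proof you should either supply an argument for $f'\ge 0$ on $[0,1]$ or replace the convexity step by the contradiction argument of Lemma \ref{lem:sign_f}.
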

\begin{proof}
Consider the process $\{V^*(X_s,s)\}$. From the \Ito formula and the definition of $V^*$, it follows that
$$
\begin{array}{l}
\displaystyle dV^*(X_s,s)=V_t^*(X_s,s)ds+V_x^*(X_s,s)dX_t-\frac{b'(s)b(s)}{\beta^2}V_{xx}^*(X_s,s)ds\\
 \displaystyle\hspace{4.8em}=(1+\alpha^2)(X_s-\gamma(1))\frac{b'(s)}{b(s)}{\mathbbm 1}_{[b(s),\infty)}(X_s)ds+V_x^*(X_s,s)\sqrt{\frac{-2b'(s)b(s)}{\beta^2}}dB_s. 
\end{array}
$$
Note that $\{\int_t^sV_x^*(X_r,r)\sqrt{-2b'(r)b(r)/\beta^2}dB_r\}$ is a martingale as 
	$$V_x^*(X_r,r)\sqrt{-2b'(r)b(r)/\beta^2}=
	V_x^*(X_r,r)\sqrt{-2b'(r)b(r)/\beta^2}{\mathbbm 1}_{(-\infty,b(r))}(X_r)+
	\sqrt{-2b'(r)b(r)/\beta^2}{\mathbbm 1}_{[b(r),\infty)}(X_r)
	$$
is bounded.

Let $\tau\in[t,1]$ be a stopping time. Since $V^*(x,t)\geq x$ as proved in Lemma \ref{lem:sign_f}, since the variable $(1+\alpha^2)(X_s-\gamma(1))[b'(s)/b(s)]{\mathbbm 1}_{[b(s),\infty)}(X_s)$ is non-positive and since $\{\int_t^sV_x^*(X_r,r)\sqrt{-2b'(r)b(r)/\beta^2}dB_r\}$ is a martingale, then, by the Optional Sampling theorem,
$$\begin{array}{l}
\displaystyle \E[X_{\tau}]\leq\E[V^*(X_\tau,\tau)]\\
\hspace{3.8em}\displaystyle=V^*(x,t)+\E\left[\int_t^\tau(1+\alpha^2)(X_r-\gamma(1))\frac{b'(r)}{b(r)}{\mathbbm 1}_{[b(r),\infty)}(X_r)dr\right]\\
\displaystyle\hspace{4.8em}+\E\left[\int_t^\tau V_x^*(X_r,r)\sqrt{\frac{-2b'(r)b(r)}{\beta^2}}dB_s\right]\\
\hspace{3.8em}\displaystyle=V^*(x,t)+\E\left[\int_t^\tau(1+\alpha^2)(X_r-\gamma(1))\frac{b'(r)}{b(r)}{\mathbbm 1}_{[b(r),\infty)}(X_r)dr\right]\leq V^*(x,t).
\end{array}$$
Hence, since this is true for any stopping time $\tau$, we have that $V(x,t)\leq V^*(x,t).$

On the other hand, since $V^*(X_{\tau^*},\tau^*)=X_{\tau^*}$, then $(1+\alpha^2)(X_s-\gamma(1))[b'(s)/b(s)]{\mathbbm 1}_{[b(s),\infty)}(X_s)=0$, for every $s$ in $[t,\tau^*)$, and thus
$$V(x,t) \geq\E[X_{\tau^*}]=\E[V^*(X_{\tau^*},\tau^*)]=V^*(x,t)+\E\left[\int_t^{\tau^*}V_x^*(X_r,r)\sqrt{\frac{-2b'(r)b(r)}{\beta^2}}dB_s\right]
=V^*(x,t).
$$
This completes the proof.
\end{proof}

\begin{lemma}\label{lem:sign_f}
Any solution $f\in{\mathbb C}^2((-\infty,1])$ to \eqref{eq:for_f}, with $\alpha\geq0$, is such that $f(y)\geq \max\{0,y\}$, for all $y\in(-\infty,1]$.
\end{lemma}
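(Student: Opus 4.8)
The plan is to derive both inequalities, $f\ge 0$ and $f\ge y$, directly from the differential equation in \eqref{eq:for_f}, rewritten as $f''=\beta^2\bigl(f+\alpha^2 y f'\bigr)$, together with the boundary data $f(1)=1$, $f'(1)=1$ and $f(y)\to 0$ as $y\to-\infty$. The argument is a sequence of elementary maximum-principle observations and does not require the explicit Kummer representation of Lemma \ref{lem:sol_ODE}; essentially the whole difficulty lies in handling the non-compact domain $(-\infty,1]$.

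First I would prove $f\ge 0$. Suppose $f$ took a negative value. Since $f$ is continuous with $f(1)=1>0$ and $f(y)\to 0$ as $y\to-\infty$, its infimum would then be negative and attained at an interior point $y_0\in(-\infty,1)$ (it cannot be approached along $y\to-\infty$, nor lie at $y=1$). At such an interior minimum $f'(y_0)=0$ and $f''(y_0)\ge 0$, so the equation gives $f(y_0)=\beta^{-2}f''(y_0)\ge 0$, contradicting $f(y_0)<0$. Hence $f\ge 0$ on $(-\infty,1]$.

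Next I would show $f$ is increasing, which is the technical heart of the proof. At any interior critical point $y_c$ the equation reduces to $f''(y_c)=\beta^2 f(y_c)$, which is $\ge 0$ by the previous step; moreover $f(y_c)=0$ is impossible, since $f(y_c)=f'(y_c)=0$ would force $f\equiv 0$ by uniqueness for the linear equation, against $f(1)=1$. Thus $f(y_c)>0$ and every interior critical point is a strict local minimum. A continuous function cannot possess two local minima without an intervening interior local maximum, which would be a critical point that is not a minimum; hence $f$ has at most one interior critical point. A single strict minimum $y_c$ is excluded because $f$ would then decrease on $(-\infty,y_c)$, forcing $f(y_c)\le\lim_{y\to-\infty}f(y)=0$ and hence $f(y_c)=0$, again impossible; and a critical-point-free $f$ with $f'<0$ would be decreasing, contradicting $f(1)=1>0=\lim_{y\to-\infty}f(y)$. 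Therefore $f'>0$ on $(-\infty,1)$.

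Finally I would combine these facts. On $[0,1]$ we have $y\ge 0$, $f\ge 0$ and $f'\ge 0$, so $f''=\beta^2(f+\alpha^2 y f')\ge 0$ and $f$ is convex there; since the tangent line at $y=1$ is exactly $y\mapsto f(1)+f'(1)(y-1)=y$, convexity yields $f(y)\ge y$ for $y\in[0,1]$. For $y\le 0$ one has $\max\{0,y\}=0\le f(y)$, while for $y\in[0,1]$ one has $\max\{0,y\}=y\le f(y)$, so $f\ge\max\{0,y\}$ on all of $(-\infty,1]$. The case $\alpha=0$ needs no separate treatment: the term $\alpha^2 y f'$ simply vanishes and $f''=\beta^2 f\ge 0$ gives convexity directly. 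The main obstacle, as anticipated, is purely the non-compactness of $(-\infty,1]$ — guaranteeing that extrema are attained in the interior rather than escaping to $-\infty$, and upgrading the pointwise sign condition at critical points to the global conclusion that at most one critical point can exist.
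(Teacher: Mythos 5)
Your proof is correct, and it reaches the conclusion by a genuinely different route from the paper's. Both arguments hinge on the same elementary observation --- at a critical point the equation \eqref{eq:for_f} collapses to $f''(y_c)=\beta^2 f(y_c)$, so the sign of $f$ there dictates local convexity --- but you deploy it differently. The paper argues each inequality by direct contradiction: assuming $f$ touches the line $l(y)=y$ at some $\bar y\in[0,1)$ (or dips below $0$ for $y<0$), it produces via Rolle/mean-value arguments an inflection or extremum point whose second-derivative sign contradicts the equation, and it treats $\alpha=0$ and $\alpha>0$ as separate cases. You instead build up structural information in stages: a global-minimum argument on the non-compact domain gives $f\geq 0$; the classification of all interior critical points as strict local minima (with uniqueness for the linear ODE ruling out $f(y_c)=f'(y_c)=0$) forces $f'>0$ on $(-\infty,1)$; and then $f''=\beta^2(f+\alpha^2 y f')\geq 0$ on $[0,1]$ makes $f$ convex there, so it lies above its tangent line at $y=1$, which is exactly $y\mapsto y$ because $f(1)=f'(1)=1$. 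Your version handles all $\alpha\geq 0$ uniformly, is explicit about the attainment of extrema on $(-\infty,1]$ (a point the paper's write-up glosses over), and yields the additional facts $f'>0$ and convexity of $f$ on $[0,1]$ as by-products; the paper's version is shorter and stays closer to the two inequalities actually needed. Both are sound.
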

\begin{proof}
Here we show the result without making use of Lemma \ref{lem:sol_ODE}.

If $\alpha=0$, then by the equation in \eqref{eq:for_f}, $f''(y)=\beta^2 f(y)$ and thus $f$ is convex where $f$ is positive and concave where $f$ is negative. As $f(1)=1$, then $f$ is convex in $1$. Thus, $f$ cannot touch the straight line $l(y)=y$ in a point $\bar y\in[0,1)$, as otherwise there would be a point $\tilde y\in(\bar y,1)$ such that $f''(\tilde y)=0$, which is impossible as $f''(\tilde y)= \beta^2 f(\tilde y)> 0$. Hence $f(y)>y$, for all $y\in[0,1)$.

Similarly, if $f(\bar y)$ is negative, for a $\bar y<0$, then $f$ is concave in $\bar y$. As $f(y)\to0$, for $y\to-\infty$, then there would be $\tilde y<\bar y$ such that $f''(\tilde y)=0$, which is impossible as $f''(\tilde y)= \beta^2 f(\tilde y)< 0$. Hence, $f(y)>0$, for all $y\in(-\infty,0]$.

If $\alpha>0$, then by the equation in \eqref{eq:for_f}, $f''(y)=\beta^2 [f(y)+\alpha^2yf'(y)]$. If $f$ touches the straight line $l(y)=y$ in a point $\bar y\in[0,1)$, then it would exist a point $\tilde y\in(\bar y,1)$ such that $f''(\tilde y)=0$. Hence, $f(\tilde y)+\alpha^2yf'(\tilde y)=0$ and so $f'(\tilde y)<0$. Therefore, it would exists a local maximum $\hat y\in(\bar y,\tilde y)$, i.e. $f'(\hat y)=0$, $f''(\hat y)\leq0$, which is impossible as $f''(\hat y)= \beta^2 f(\hat y)> 0$. Hence $f(y)>y$, for all $y\in[0,1)$.

Similarly,  if $f(\bar y)$ is negative, as $f(y)\to0$, for $y\to-\infty$, there exists a local minimum $\hat y<\bar y$, i.e. $f'(\hat y)=0$, $f''(\hat y)\geq 0$, which is impossible as  $f''(\hat y)= \beta^2 f(\hat y)<0$. Hence, $f(y)>0$, for all $y\in(-\infty,0]$.
This completes the proof.
\end{proof}

\section{Conclusion}\label{sec:conc}
Theorem \ref{thm:proof_opt} gives the explicit solution to the optimization problem \eqref{eq:def_V}. 

Moreover, suppose that $\gamma(t):=\beta(\alpha)\sqrt{1-t}$. Thus, $b(t)=\gamma(t)$ since $\gamma(1)=0$. Then Theorem \ref{thm:proof_opt} states that the optimal stopping time associated to the process
	$$dX_s=-\frac{(1+\alpha^2)}{2}\frac{X_s}{1-s}ds+dB_s, \;\;\;s\in[t,1]$$
with $X_t=x$,
is 
    $$\tau^*=\inf\{s\in[t,1]:X_s\geq\gamma(t) \}$$ 
and 
	$$V(x,t)=\sup_\tau \E[X_\tau]= \E[X_{\tau^*}]=\left\{
	\begin{array}{ll}
	\displaystyle h_1\left(-\alpha\beta(\alpha)\frac{x}{\gamma(t)}\right)
	\frac{\gamma(t)}{h_1(-\alpha\beta(\alpha))},&x<\gamma(t)\\
	x,&x\geq \gamma(t)
	\end{array}\right.$$

As consequence of our result we obtain that if the drift in the \Ito representation of the Brownian bridge is multiplied by a constant factor $(1+\alpha^2)/2$, then the associated optimal barrier has the same shape as the barrier of the Brownian bridge multiplied by the constant factor $\beta(\alpha)/\beta(1)$. 

\begin{figure}
  \centering
  \includegraphics[clip, trim=0cm 1.2cm 0cm 1.2cm, width=.95\textwidth]{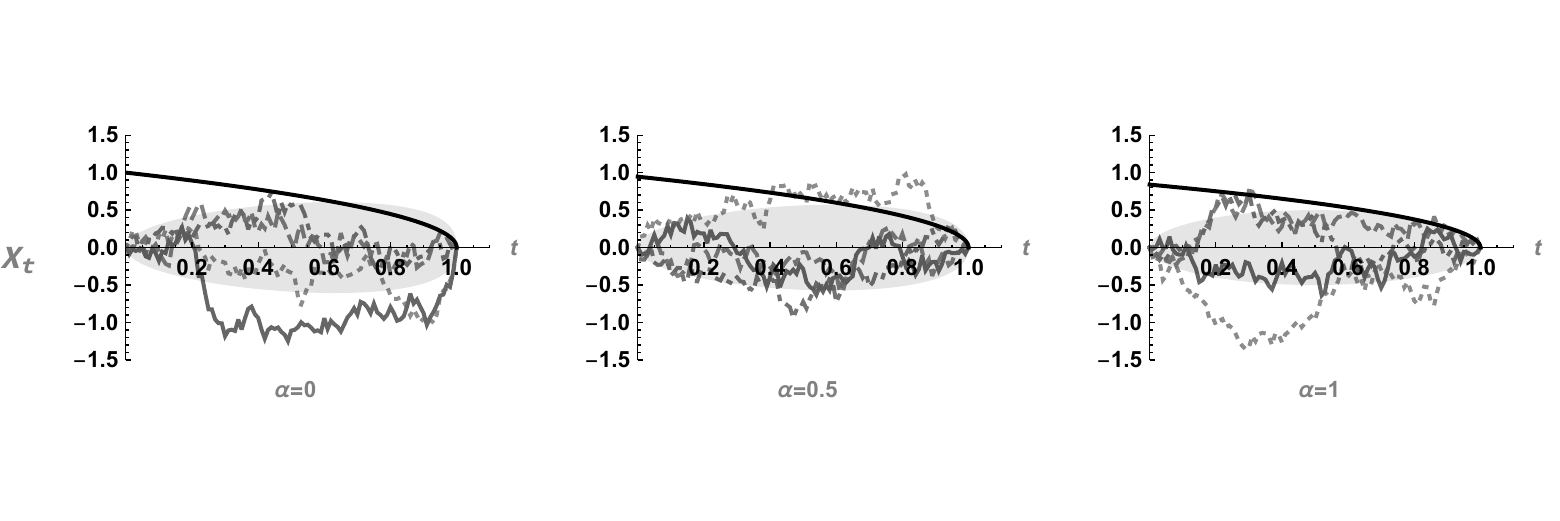}
  \includegraphics[clip, trim=0cm 1.2cm 0cm 1.2cm, width=.95\textwidth]{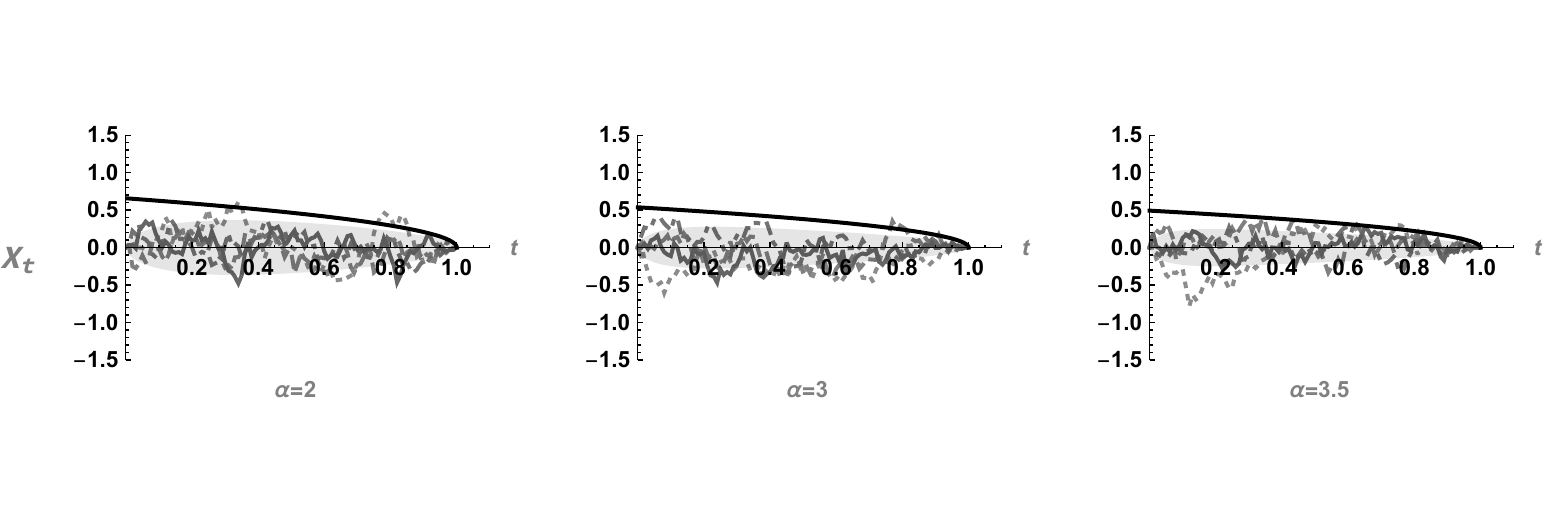}
    \caption{\label{fig:simulations} Four simulations of $X_s$, $s\in[0,1]$, as defined in \eqref{eq.spec.proc} with $t=0$, $x=0$, for different values of $\alpha$. 
    The light grey areas represent one standard deviation of $X_s$ above and below its expected value which is identically zero in the cases shown in the plots.
    The black solid lines show the optimal stopping boundaries.}
\end{figure}

Note that for $\alpha\geq0$, $\alpha\neq 1$, the process $\{X_s\}$ defined above is not a Brownian bridge. Indeed, by Lemma \ref{lem:bridge_proof},	
\begin{equation}\label{eq.spec.proc}
  X_s=x+\int_t^s\left[\frac{1-s}{1-r}\right]^{(1+\alpha^2)/2}dB_r \ , \quad  s\in [t,1] 
\end{equation}
This class of processes is such that the associated optimal barriers have the same shape as the optimal barrier of the Brownian bridge (see Figure \ref{fig:simulations}). Therefore, this class provides a catalogue of alternative bridges to the Brownian bridge which in practice could be better adapted to the data. 

Moreover, as consequence of Theorem \ref{thm:proof_opt}, we have proved that, for any given (decreasing) curve, there exist a process with this curve as optimal barrier. This provides a model for the optimal liquidation time, i.e. the optimal time at which the investor should liquidate a position in order to maximize the gain.

\appendix
\section{Technical lemma} 
\begin{lemma} \label{lm.uniq.sol} There exists a unique $x_\alpha<0$ such that  
  \begin{equation}\label{beta.cond}
    x_\alpha h_1'( x_\alpha)=h_1(x_\alpha) \ .
  \end{equation}
\end{lemma}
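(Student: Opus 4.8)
The plan is to convert \eqref{beta.cond} into a statement about the sign changes of the single function
\[
  \psi(z) := z\, h_1'(z) - h_1(z),
\]
whose zeros on $(-\infty,0)$ are exactly the numbers $x_\alpha$ we are after. The starting point is that, undoing the substitution $h_1 = u_1\exp[z^2/4]$ in the parabolic cylinder equation satisfied by $u_1$, the function $h_1$ itself solves the linear ODE
\[
  h_1''(z) = z\,h_1'(z) + \tfrac{1}{\alpha^2}\,h_1(z),
\]
where $\tfrac1{\alpha^2}=2\xi>0$ since $\alpha>0$. Everything below rests on this equation together with global sign information on $h_1$.

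First I would establish that $h_1>0$ and $h_1'<0$ on all of $\mathbb R$. The clean device is the integral representation
\[
  h_1(z) = \frac{1}{\Gamma(2\xi)} \int_0^\infty t^{2\xi-1}\, e^{-t^2/2 - z t}\, dt,
\]
which makes sense precisely because $2\xi>0$. One checks, by differentiating under the integral sign and using $\frac{d}{dt}\big[t^{2\xi}e^{-t^2/2-zt}\big] = (2\xi t^{2\xi-1} - t^{2\xi+1} - z t^{2\xi})e^{-t^2/2-zt}$, that the right-hand side solves the same ODE and decays like $z^{-2\xi}$ as $z\to+\infty$; since the recessive solution at $+\infty$ is unique up to a constant, this integral is a positive multiple of $h_1$ by the normalization \eqref{U.infty.behavior}. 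Positivity of $h_1$ and negativity of $h_1'$ are then immediate from the positivity of the integrands. \emph{This step is the main obstacle}: one must not invoke the asymptotic $h_1\sim z^{-2\xi}$ of \eqref{U.infty.behavior} as $z\to-\infty$, since there $h_1$ in fact grows like $h_2$; the integral representation is what controls the sign of $h_1$ along the whole real line.

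Uniqueness then follows from a one-line computation. Differentiating $\psi$ and using the ODE,
\[
  \psi'(z) = z\,h_1''(z) = z\,\psi(z) + (1+2\xi)\,z\,h_1(z),
\]
so at any zero $z_0<0$ of $\psi$ we get $\psi'(z_0) = (1+2\xi)\,z_0\,h_1(z_0) < 0$, because $z_0<0$, $h_1(z_0)>0$ and $1+2\xi>0$. Thus every negative zero of $\psi$ is a strict down-crossing, which forces at most one zero on $(-\infty,0)$: two zeros $z_1<z_2$ would make $\psi$ negative just to the right of $z_1$ and positive just to the left of $z_2$, producing an intermediate up-crossing and contradicting the sign of $\psi'$ there.

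For existence I would compare the two endpoints. At the right end $\psi(0) = -h_1(0) < 0$, while as $z\to-\infty$ a Laplace estimate on the integral representation (giving $h_1(-s)\sim \sqrt{2\pi}\,s^{2\xi-1}e^{s^2/2}$ and $-h_1'(-s)\sim\sqrt{2\pi}\,s^{2\xi}e^{s^2/2}$) yields $\psi(z)\sim \sqrt{2\pi}\,e^{s^2/2}\,s^{2\xi-1}(s^2-1)\to+\infty$ with $s=-z$. By the intermediate value theorem $\psi$ has at least one zero in $(-\infty,0)$, and by the previous paragraph exactly one; setting $x_\alpha$ equal to this zero completes the argument.
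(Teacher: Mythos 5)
Your proof is correct, and its core mechanism is the same as the paper's: the paper phrases everything in terms of critical points of $h_1(x)/x$, shows via the ODE that any critical point on $(-\infty,0)$ is a strict local maximum (because $h_1''(x_\alpha)$ is a positive multiple of $h_1(x_\alpha)>0$), and combines this with $h_1(x)/x\to-\infty$ at both ends of $(-\infty,0)$; since $\psi(z)=z h_1'(z)-h_1(z)=z^2\,\partial_z\bigl(h_1(z)/z\bigr)$, your ``every zero of $\psi$ is a strict down-crossing'' is the same statement in different clothing. Where you genuinely add something is in the two inputs the paper merely asserts: positivity of $h_1$ on the negative axis (the paper says this holds ``by the definition'' \eqref{h1.def}, which is far from obvious from a signed combination of Kummer functions) and the endpoint behavior $h_1(x)/x\to-\infty$ as $x\to-\infty$. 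Your integral representation $h_1(z)=\Gamma(2\xi)^{-1}\int_0^\infty t^{2\xi-1}e^{-t^2/2-zt}\,dt$ settles both at once, and you are right that one cannot instead appeal to \eqref{U.infty.behavior} at $z\to-\infty$: as written there, $h_1(z)\sim z^{-2\xi}$ would give $h_1(x)/x\to0$, whereas the Laplace estimate (and the Mills-ratio special case $\alpha=1$) shows $h_1$ actually grows like $e^{z^2/2}|z|^{2\xi-1}$ on that side, which is exactly what the paper's own existence step needs. One cosmetic remark: the paper applies \eqref{h.diff.eq} to $h_1$ with coefficients $(1,\alpha^2)$ rather than your $(1/\alpha^2,1)$ in the $z$-variable; both coefficients are positive, so the sign conclusion is unaffected, but your version is the consistent one.
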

\begin{proof}
  By the definition of $h_1(x)$ in \eqref{h1.def}, $h_1(x)$ is positive for $x<0$ 
  and  $h_1(x)/x\to-\infty$ as $x\to-\infty$.
  
  The condition \eqref{beta.cond} implies that $x_\alpha\not=0$ and that the function $h_1(x)/x$ has a critical point at $x_\alpha$.
  Indeed taking derivatives we have
  $$\partial_x \left( \frac{h_1(x)}{x} \right) = \frac{1}{x^2} \left(x \, h'_1(x) - h_1(x)\right)$$
  and equating this expression to $0$ is the same as \eqref{beta.cond}.

  Now using \eqref{h.diff.eq} we have that
  \begin{equation}\label{sign.2nd.der}
    h''_1(x_\alpha) 
      = h_1(x_\alpha) + \alpha^2 x_\alpha h'_1(x_\alpha)
      = h_1(x_\alpha) (1 + \alpha^2) 
      > 0 \ .
  \end{equation}
  Computing the second derivative of $h_1(x)/x$ at $x_\alpha$ we have
  \begin{equation}
    \left.\partial^2_x \left( \frac{h_1(x)}{x} \right)\right|_{x_\alpha}
    = \left. \left(\frac{h_1''(x)}{x} - \frac{2}{x^3} \left(x \, h_1'(x)-h_1(x)\right) \right)\right|_{x_\alpha}
    = \frac{h_1''(x_\alpha)}{x_\alpha}
  \end{equation}
  and, using \eqref{sign.2nd.der}, it follows that $x_\alpha$ is a maximum point when $x_\alpha<0$ (and it would be a minimum point when $x_\alpha>0$).
  
  Now considering $x<0$, as $\beta(\alpha) = -x_\alpha/\alpha$,
  it follows that any critical point of $h_1(x)/x$ is a local maximum, and therefore there cannot be more than one.

  If there were two local maxima, this would imply a local minimum between them (if the function is not constant).
  Since $\lim_{x\to-\infty} h_1(x)/x = \lim_{x\to0^-} h_1(x)/x = -\infty$, there is a unique $x_\alpha < 0$ satisfying \eqref{beta.cond}. This completes the proof.
\end{proof}


\bibliographystyle{siamplain.bst}
\bibliography{references}

\begin{thebibliography}{1}

\bibitem{AS64}
{\sc M.~Abramowitz and I.~Stegun}, {\em Handbook of Mathematical Functions with
  Formulas, Graphs, and Mathematical Tables}, Dover-- New York, 1nd~ed., 1964.

\bibitem{BCSY15}
{\sc E.~J. Baurdoux, N.~Chen, B.~A. Surya, and K.~Yamazaki}, {\em Optimal
  double stopping of a brownian bridge}, Adv. in Appl. Probab., 4 (2015),
  pp.~1212--1234, \url{https://doi.org/10.1137/140951758}.

\bibitem{ELT11}
{\sc E.~Ekstr{\"o}m and J.~T. C.~Lindberg}, {\em Optimal liquidation of a paris
  trade}, Advanced Mathematical Methods for Finance, 4 (2011), pp.~247--255,
  \url{https://doi.org/10.1137/140951758}.

\bibitem{EW09}
{\sc E.~Ekstr{\"o}m and H.~Wanntorp}, {\em Optimal stopping of a brownian
  bridge}, J. Appl. Probab., 1 (2009), pp.~170--180,
  \url{https://doi.org/10.1137/140951758}.

\bibitem{PS06}
{\sc G.~Peskir and A.~Shiryaev}, {\em Optimal Stopping and Free-Boundary
  Problems}, ETH Zurich--Birkhauser Verlag--Basel, 1nd~ed., 2006.

\bibitem{S69}
{\sc L.~Shepp}, {\em Explicit solutions to some problems of optimal stopping},
  Ann. Math. Statist., 1 (1969), pp.~993--1010,
  \url{https://doi.org/10.1137/140951758}.

\end{thebibliography}

\end{document}